\theoremstyle{plain}
\newtheorem{thm}{\bf Theorem}[section]
\newtheorem{prop}[thm]{\bf Proposition}
\newtheorem{lem}[thm]{\bf Lemma}
\theoremstyle{definition}
\newtheorem{defn}[thm]{\bf Definition}
\theoremstyle{remark}
\newtheorem{rem}[thm]{\bf Remark}
\newtheorem{conj}{\bf Conjecture}
\theoremstyle{example}
\def \Shad{{\mathrm{Shad}}}
\def \supp{{\mathrm{supp}}}
\def \n{\mathbf n}
\def \1{\mathbf 1}
\begin{document}

\title[]{On the polymatroidal property of monomial ideals with a view towards orderings of minimal generators}

\author{Somayeh Bandari and Rahim Rahmati-Asghar}
\address{Somayeh Bandari, Department of Mathematics, Buein Zahra Technical University,  Buein Zahra, Qazvin,
Iran.} \email{somayeh.bandari@yahoo.com}

\address{Rahim Rahmati-Asghar, Department of Mathematics, Faculty of Basic Sciences,
University of Maragheh, P. O. Box 55136-553, Maragheh, Iran and
School of Mathematics, Institute for Research in Fundamental
Sciences (IPM), P.O. Box 19395-5746, Tehran, Iran.}
\email{rahmatiasghar.r@gmail.com}

\keywords{Lexicographical order, lexsegment ideals, linear
quotients, polymatroidal ideals, reverse lexicographical order}

\subjclass[2010]{13F20; 05E40}

 \maketitle

\begin{abstract}
We prove that a monomial ideal $I$ generated in a single degree, is
polymatroidal if and only if it has linear quotients with respect to
the lexicographical ordering of the minimal generators induced by
every ordering of variables. We also conjecture that the
polymatroidal ideals can be characterized with linear quotients
property   with respect to the reverse lexicographical ordering of
the minimal generators induced by every ordering of variables. We
prove our conjecture in many special cases.
\end{abstract}

\section{\bf Introduction}

A monomial ideal is called polymatroidal, if its monomial generators
correspond to the bases of a discrete polymatroid, see \cite{HeHi}.
Since the set of bases of a discrete polymatroid is characterized by
the so-called exchange property, it follows that a polymatroidal
ideal may as well be characterized as follows: let $I\subset
S=K[x_1,\ldots,x_n]$ be a monomial ideal generated in a single
degree and  $G(I)$ be the unique minimal set of monomial generators
of $I$. Then $I$ is said to be polymatroidal, if for any two
elements $u,v\in G(I)$ such that $\deg_{x_i}(u)> \deg_{x_i}(v)$
there exists an index $j$ with $\deg_{x_j}(u)< \deg_{x_j}(v)$ such
that $x_j(u/x_i)\in I$. A squarefree polymatroidal ideal is called
matroidal. They have many attractive structural properties which
have been considered by many mathematicians in recent years, see for
example \cite{BaHe,CoHe,HeHi06,HeRaVl}. Bandari and Herzog
\cite{BaHe} conjectured  that a monomial ideal is polymatroidal if
and only if all its monomial localizations have a linear resolution.
In this paper we try to characterize polymatroidal ideals with
linear quotients property.
 Bj\"{o}rner in
\cite[Theorem 7.3.4]{Bj} showed that a simplicial complex $\Delta$
is a matroid complex if and only if $\Delta$ is pure and every
ordering of the vertices induces a shelling. Hence a monomial ideal
 is matroidal if and only if it is  generated in a single degree and it also has linear
 quotients with
respect to every ordering
 of generators.
  Herzog and Takayama in \cite{{HeTa}}
showed that a polymatroidal ideal has linear quotients with respect
to the reverse lexicographical order of the minimal generators
induced by every ordering of variables. A natural question arises
whether we can generalize Bj\"{o}rner's result to polymatroidal
ideals. In Proposition \ref{proplex}, we show that a polymatroidal
ideal also has linear quotients with respect to the  lexicographical
order of the minimal generators. Then in Theorem \ref{thmlex}, we
prove that for a monomial ideal $I$  generated in a single degree,
$I$ is polymatroidal if and only if
 $I$ has linear quotients with respect to the  lexicographical ordering of the minimal
generators induced by every ordering of variables.

It is natural to ask whether we have similar result  with the
reverse lexicographical order assumption.
  Due to
computational evidence we are lead to conjecture that the monomial
ideals generated in a single degree which  have linear quotients
with respect to the reverse lexicographical order of the generators
induced by every ordering of variables are precisely the
polymatroidal ideals. We discuss several special cases which support
this conjecture. In fact we give a positive answer to the conjecture
in the following cases:
 1. $I$ is generated in degree 2 (Proposition \ref{quadratic}), 2. $I$ contains at
least $n-1$ pure powers (Proposition \ref{pure power}), 3. $I$ is
monomial ideal in at most 3 variables (Remark \ref{two variable} and
Proposition \ref{dimthree}), 4. $I=(u\in M_d\;|\; u\geq_{lex} v)$
for some $v\in M_d$ (Proposition \ref{lex}), 5. $I$ is a completely
lexsegment ideal (Proposition \ref{comlex}).

\section{\bf The polymatroidality and orderings induced by variables on minimal generators}
Throughout $S=K[x_1,\ldots,x_n]$ is the polynomial ring over a field
$K$, $>_{lex}$ is  the  lexicographic order  and $>$ is the reverse
lexicographic order on monomials of $S$. In the following, we recall
some preliminary concepts.

Let $u=x_1^{a_1}\cdots x_n^{a_n}$ be a monomial of $S$. We define
$\deg(u)=\sum_{i=1}^na_i$, $\deg_{x_i}(u)=a_i$ and
$\supp(u)=\{x_i\;|1\leq i\leq n\;,\; a_i>0\}$.

Let $u=x_1^{a_1}\cdots x_n^{a_n}$ and $v=x_1^{b_1}\cdots x_n^{b_n}$.
 We define the total order $>_{lex}$ on monomials of
$S$ by setting $u>_{lex}v$ if either (i) $\deg(u)>\deg(v)$ or (ii)
$\deg(u)=\deg(v)$ and there exists an integer $i$ with $a_i>b_i$
 and $a_k = b_k$ for $k=1,\ldots,i-1$. It
follows that $>_{lex}$ is a monomial order on $S$, which is called
the \emph{lexicographic order} on $S$ induced by the ordering
$x_1>_{lex}x_2
>_{lex}\cdots>_{lex} x_n$.

 We define the total order $>$ on monomials of
$S$ by setting $u>v$ if either (i) $\deg(u)>\deg(v)$ or (ii)
$\deg(u)=\deg(v)$ and there exists an integer $i$ with $b_i>a_i$
 and $a_k = b_k$ for $k=i+1,\ldots,n$. It
follows that $>$ is a monomial order on $S$, which is called the
\emph{reverse lexicographic order} on $S$ induced by the ordering
$x_1>x_2
>\cdots> x_n$.

For a monomial ideal $I$ and a monomial $v$ of $S$,
$\{u/gcd(u,v)\;|\; u\in G(I)\}$ is a set of generators of $I:v$.
\begin{defn}
The monomial ideal $I\subset S$ has \emph{linear quotients} whenever
there is an ordering $u_1,\ldots,u_r$ on the minimal generators of
$I$ such that for $j=2,\ldots,r$, the minimal generators of the
colon ideal $(u_1,\ldots,u_{j-1}):u_j$ are variables.
\end{defn}

 By \cite[Lemma 1.3]{HeTa} and
\cite[Lemma 4.1]{CoHe}, a polymatroidal ideal $I$ has linear
quotients with respect to the reverse lexicographical order of the
minimal generators and so it has a linear resolution.
 In the following, we show that we have similar result with  the  lexicographical order assumption.

\begin{prop}\label{proplex}
Let $I$ be a  polymatroidal ideal. Then $I$  has linear quotients
with respect to the  lexicographical order of the minimal
generators.
\end{prop}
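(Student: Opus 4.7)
The plan is to fix the lexicographic ordering $u_1 >_{lex} u_2 >_{lex} \cdots >_{lex} u_r$ of $G(I)$ and, for each pair $i<j$, produce a single variable that witnesses linear quotients. Concretely, I want to find $k<j$ and a variable $x_s$ such that $u_k/\gcd(u_k,u_j) = x_s$ and $x_s$ divides $u_i/\gcd(u_i,u_j)$; this shows the colon ideal $(u_1,\ldots,u_{j-1}):u_j$ is generated by variables.

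First I would isolate the position where $u_i$ and $u_j$ first disagree. Since $\deg(u_i)=\deg(u_j)$ and $u_i>_{lex}u_j$, there is an index $s$ with $\deg_{x_k}(u_i)=\deg_{x_k}(u_j)$ for all $k<s$ and $\deg_{x_s}(u_i)>\deg_{x_s}(u_j)$. The polymatroidal exchange property applied to this pair yields an index $t$ with $\deg_{x_t}(u_i)<\deg_{x_t}(u_j)$ (in particular $x_t\mid u_j$) and $u_k:=x_s(u_j/x_t)\in G(I)$. Note that necessarily $t>s$, because $u_i$ and $u_j$ agree in coordinates $1,\ldots,s-1$ and $\deg_{x_s}(u_i)>\deg_{x_s}(u_j)$.

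Next I would verify that $u_k$ sits earlier than $u_j$ in the lexicographic ordering. Since $u_k$ and $u_j$ have equal $x_\ell$-degree for all $\ell<s$ (neither is altered there) and $\deg_{x_s}(u_k)=\deg_{x_s}(u_j)+1$, we have $u_k>_{lex}u_j$, hence $k<j$. Moreover $\gcd(u_k,u_j)=u_j/x_t$, so $u_k/\gcd(u_k,u_j)=x_s$. Finally $x_s$ divides $u_i/\gcd(u_i,u_j)$ because $\deg_{x_s}(u_i)>\deg_{x_s}(u_j)$. Thus $x_s$ lies in $(u_1,\ldots,u_{j-1}):u_j$ and divides $u_i/\gcd(u_i,u_j)$, which is exactly the linear quotients condition.

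The argument is essentially a careful tracking of degrees after applying exchange. The only place where one has to be a little careful is checking that the exchange index $t$ comes strictly after $s$, so that the replacement $x_s(u_j/x_t)$ actually increases the lexicographic weight at the first differing coordinate rather than disturbing earlier coordinates; once $t>s$ is secured, everything else is a direct verification. There is no real obstacle, only bookkeeping.
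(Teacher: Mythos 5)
Your proposal is correct and follows essentially the same route as the paper: locate the first index $s$ where $u_i$ and $u_j$ differ, use exchange to produce $u_k=x_s(u_j/x_t)\in G(I)$, observe $t>s$ forces $u_k>_{lex}u_j$, and conclude $u_k:u_j=x_s$ divides $u_i:u_j$. The one point you should make explicit is that the exchange you invoke (removing $x_t$ from $u_j$ and inserting $x_s$) is the symmetric/dual exchange property rather than the literal defining exchange, which modifies $u_i$; this dual form is guaranteed for polymatroidal ideals by \cite[Theorem 3.1]{HeHi06}, the same citation the paper relies on.
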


\begin{proof}
 Let $u\in G(I)$ and  $J=(v\in G(I)\;|\;v>_{lex}u)$. In order to prove that $J:u$ is generated
by monomials of degree 1, we have to show that for each $v\in G(I)$
such that $v>_{lex}u$, there exists $x_i\in  J:u$ such that
$x_i|v:u$. Let $u=x_1^{a_1}\cdots x_n^{a_n}$  and $v=x_1^{b_1}\cdots
x_n^{b_n}$. Since $v>_{lex}u$, there exists an integer $i$ with
$b_i>a_i$
 and $a_k = b_k$ for $k=1,\ldots,i-1$. Now since $I$ is a
polymatroidal ideal, it follows by \cite[Theprem 3.1]{HeHi06} that
there exists an integer $j$ with $b_j<a_j$ such that
$u'=(u/x_j)x_i\in G(I)$. Since $j>i$, we have that $u'\in J$. Hence
$x_iu=x_ju'\in J$, so $x_i\in J:u$. On the other hand since
$\deg_{x_i}(v:u)=b_i-\min\{b_i,a_i\}=b_i-a_i>0$, we have $x_i|v:u$.
\end{proof}

For proof of the next theorem, we need the following result.
\begin{lem}\label{pure}
Let $I\subset S$ be a monomial ideal generated in a single degree
and $u,v\in G(I)$ such that $\supp(u:v)=\{x_1\}$. Suppose $I$ has
linear quotients with respect to the  lexicographical order
$>_{lex}$, induced by $x_1>_{lex}\cdots>_{lex}x_n$. Then we have the
following exchange property:
 $$(u/x_1)x_i\in I \;\text{for some}\; i\; \text{with}\; \deg_{x_i}(v)>\deg_{x_i}(u).$$
\end{lem}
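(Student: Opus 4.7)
The plan is to argue by induction on $d := a_1 - b_1 = \deg_{x_1}(u:v) \geq 1$. For the base case $d=1$: since $\deg u = \deg v$ and $\supp(u:v)=\{x_1\}$, the identity $\sum_{i>1}(b_i - a_i) = a_1 - b_1 = 1$ pins down a unique index $j>1$ with $b_j = a_j + 1$ and $b_i = a_i$ for all other $i$; then $v=(u/x_1)x_j$ lies in $I$ and $b_j > a_j$, as required.

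For the inductive step $d\geq 2$, set $T := \{i>1: b_i > a_i\}$ and apply the lex linear quotients property to $v$. Since $u >_{lex} v$ and $u:v = x_1^d$ admits only $x_1$ as a variable divisor, the variable $x_1$ lies in $J_v:v$, so some $v':= x_1 v/x_k$ with $k>1$, $b_k\geq 1$ belongs to $G(I)$. In the easy sub-case some such witness $k$ lies in $T$: then $\supp(u:v')=\{x_1\}$ with $\deg_{x_1}(u:v') = d-1$, and the inductive hypothesis applied to $(u,v')$ produces $(u/x_1)x_i\in I$ for some $i$ with $\deg_{x_i}(v') > a_i$; a short check (separating $i=k$ from $i\neq k$, and ruling out $i=1$ since $\deg_{x_1}(v') = b_1+1 < a_1$) confirms that any such $i$ lies in $T$.

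The main obstacle is the hard sub-case in which every witness $k$ satisfies $a_k = b_k$. My plan here is to iterate the lex linear quotients. Since $u:v' = x_1^{d-1}x_k$, at least one of $x_1, x_k$ must be a minimal generator of $J_{v'}:v'$. If $x_k$ is, the corresponding witness takes the form $x_k v'/x_\ell = x_1 v/x_\ell \in G(I)$ with $\ell > k$, furnishing a new witness for $x_1 \in J_v:v$ that, by the hard-case hypothesis, again satisfies $a_\ell = b_\ell$; this strictly increases the candidate index and so terminates, eventually forcing the $x_1$-branch and producing $v'' = x_1 v'/x_\ell \in G(I)$ with $\deg_{x_1}(v'') = b_1+2$. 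I expect to organize the overall proof as a minimal-counterexample argument: fix $v$ so that $d$ is minimal, climb the $x_1$-degree via the iteration above, and derive a contradiction either by triggering the easy sub-case at some stage (contradicting minimality of $d$) or by reaching a terminal element of $G(I)$ that directly forces $(u/x_1)x_j\in I$ for some $j\in T$. The principal technical difficulty is tracking $\supp(u:v^{(t)})$ along the climb and ensuring that the terminal step removes an index in $T$.
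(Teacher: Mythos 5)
Your base case and your ``easy sub-case'' reproduce the paper's argument: induct on $m=\deg_{x_1}(u:v)$, use linear quotients to find $w\in G(I)$ with $w>_{lex}v$ and $w:v=x_1$, write $w=x_1v/x_k$, and when $\deg_{x_k}(v)>\deg_{x_k}(u)$ conclude $u:w=x_1^{m-1}$ and apply the induction hypothesis to $(u,w)$. (The paper simply asserts $u:w=x_1^{m-1}$, which is only true in this easy case, so you have correctly isolated the delicate point that the paper glosses over.) The genuine gap is your hard sub-case, where every witness $k$ satisfies $\deg_{x_k}(v)=\deg_{x_k}(u)$: what you offer there is a plan, not a proof. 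The climbing iteration only produces elements of $G(I)$ with larger $x_1$-degree; nothing in the construction controls $\supp(u:v^{(t)})$ along the climb, the loop is not shown to re-enter the easy case, and the final alternative --- that a ``terminal element of $G(I)$ directly forces $(u/x_1)x_j\in I$ for some $j\in T$'' --- is asserted without any argument. That last step is exactly where the content of the lemma lies, so the proposal does not close.

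Moreover, no completion of your hard-case plan is possible under the stated hypothesis (linear quotients for the single lex order induced by $x_1>_{lex}\cdots>_{lex}x_n$). Take $I=(x_1^2x_2,\ x_1^2x_3,\ x_1x_3^2,\ x_2x_3^2)$ in $K[x_1,x_2,x_3]$: in descending lex order the successive colon ideals are $(x_2)$, $(x_1)$, $(x_1)$, so $I$ has linear quotients for this order, yet for $u=x_1^2x_2$ and $v=x_2x_3^2$ we have $u:v=x_1^2$, the only index with $\deg_{x_i}(v)>\deg_{x_i}(u)$ is $i=3$, and $(u/x_1)x_3=x_1x_2x_3\notin I$. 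On this example your hard sub-case is precisely what occurs (the only witness is $k=2$ with $\deg_{x_2}(u)=\deg_{x_2}(v)$), and your climb runs $v\mapsto x_1x_3^2\mapsto x_1^2x_3$ and stops, yielding neither a contradiction nor the exchange. So any correct treatment of the hard case must invoke lex linear quotients with respect to \emph{other} orderings of the variables, which is the hypothesis actually available where the lemma is used in Theorem \ref{thmlex}; note that the same example shows the paper's own proof of the lemma breaks at the identical spot, since its claim $u:w=x_1^{m-1}$ fails exactly in your hard sub-case.
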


\begin{proof}
Let $u:v=x_1^m$  for some $m\geq 1$. By induction on $m$, we show
that we have the exchange property. The assertion is trivial, if
$m=1$. Now, let $m\geq 2$. Since $u>_{lex}v$, it follows by linear
quotient property that there exists a monomial $w\in G(I)$, such
that $w>_{lex} v$ and $w:v=x_1$. Indeed $w=x_1z$ and $v=x_tz$ for
some monomial $z$ and $t\neq 1$. Now since $u:w=x_1^{m-1}$, our
induction hypothesis implies that $(u/x_1)x_i\in I$, for some $i$
with $\deg_{x_i}(w)>\deg_{x_i}(u)$. Note that $i\neq 1$, that
implies
$\deg_{x_i}(u)<\deg_{x_i}(w)=\deg_{x_i}(z)\leq\deg_{x_i}(v)$.

\end{proof}
\begin{thm}\label{thmlex}
Let $I\subset S$ be a monomial ideal generated in a single degree.
Then the following conditions are equivalent:
\begin{enumerate}[\upshape (a)]
  \item $I$ is polymatroidal.
  \item $I$ has linear quotients with respect to the  lexicographical ordering of the minimal
  generators induced by every ordering of variables.
\end{enumerate}
\end{thm}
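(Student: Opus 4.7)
For the direction (a) $\Rightarrow$ (b), the key observation is that polymatroidality is invariant under any permutation of the variables. Applying Proposition \ref{proplex} after each such relabeling yields linear quotients with respect to the lex order induced by every ordering of the variables.

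For (b) $\Rightarrow$ (a), the plan is to verify the exchange property. Fix $u, v \in G(I)$ and an index $s$ with $\deg_{x_s}(u) > \deg_{x_s}(v)$; I seek $j$ such that $\deg_{x_j}(u) < \deg_{x_j}(v)$ and $(u/x_s)x_j \in I$. The strategy is to reduce to the case handled by Lemma \ref{pure} by constructing an auxiliary generator $v^* \in G(I)$ satisfying $\supp(u : v^*) = \{x_s\}$, $\deg_{x_s}(u) > \deg_{x_s}(v^*)$, and $\supp(v^* : u) \subseteq \supp(v : u)$. Once $v^*$ is in hand, Lemma \ref{pure} (applied to the lex order with $x_s$ first) produces an index $j$ with $\deg_{x_j}(v^*) > \deg_{x_j}(u)$ and $(u/x_s)x_j \in I$; the containment $\supp(v^* : u) \subseteq \supp(v : u)$ then upgrades this to $\deg_{x_j}(v) > \deg_{x_j}(u)$, which is exactly what is needed.

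To build $v^*$, I proceed iteratively, starting with $v^{(0)} := v$. So long as $\supp(u : v^{(k)}) \supsetneq \{x_s\}$, I pick $x_t \in \supp(u : v^{(k)}) \setminus \{x_s\}$ and attempt a step $v^{(k+1)} := x_t v^{(k)} / x_m \in G(I)$ for some $x_m \in \supp(v^{(k)} : u)$. A direct computation verifies that any such step strictly decreases the distance $d(u, v^{(k)}) = \deg(u/\gcd(u, v^{(k)}))$, preserves $\deg_{x_s}(u) > \deg_{x_s}(v^{(k+1)})$ (since $x_m \neq x_s$ as $\supp(v^{(k)}:u) \cap \supp(u:v^{(k)}) = \emptyset$), and keeps $\supp(v^{(k+1)} : u) \subseteq \supp(v^{(k)} : u) \subseteq \supp(v:u)$. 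Since $d$ is a non-negative integer, the iteration terminates, and at termination one must have $\supp(u : v^{(N)}) = \{x_s\}$, yielding $v^*$.

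The main technical obstacle is guaranteeing the existence of such a step at each stage. To produce $v^{(k+1)}$, one applies the linear quotients hypothesis with respect to an ordering tailored to $(u, v^{(k)}, x_t)$ --- for instance, placing $x_t$ first, $\supp(v^{(k)} : u)$ immediately afterwards, and the remaining elements of $\supp(u : v^{(k)}) \setminus \{x_t\}$ at the end --- and extracts a monomial $w = x_l v^{(k)} / x_m \in G(I)$ with $x_l \mid u : v^{(k)}$ and $x_l$ preceding $x_m$. The delicate point is to argue, via the positional constraints of this ordering (possibly combined with further orderings or a secondary induction), that one can arrange $x_l = x_t$ and $x_m \in \supp(v^{(k)} : u)$, so that $v^{(k+1)} := w$ satisfies the invariants above. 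This case analysis --- in particular ruling out the configurations in which $x_l$ lands at the end of the ordering (where the absence of a valid $x_m$ strictly later should give a contradiction) or $x_m$ lands outside $\supp(v^{(k)} : u)$ --- is what I expect to be the core technical bookkeeping of the argument.
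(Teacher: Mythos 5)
Your direction (a) $\Rightarrow$ (b) is fine and is exactly the paper's argument (Proposition \ref{proplex} plus relabelling of variables). The problem is in (b) $\Rightarrow$ (a): the entire weight of your proof rests on the claim that, whenever $\supp(u:v^{(k)})\supsetneq\{x_s\}$, you can realize a step $v^{(k+1)}=x_tv^{(k)}/x_m\in G(I)$ with the \emph{prescribed} incoming variable $x_t\in\supp(u:v^{(k)})\setminus\{x_s\}$ and with $x_m\in\supp(v^{(k)}:u)$. But this is precisely the (symmetric/dual) exchange property between $u$ and $v^{(k)}$, i.e.\ essentially the statement you are trying to prove; you do not derive it from hypothesis (b), you only express the expectation that a tailored ordering plus case analysis will yield it. A single application of the linear quotients hypothesis, for any ordering in which $u$ precedes $v^{(k)}$, only produces \emph{some} $w=x_lv^{(k)}/x_m\in G(I)$ with $x_l\mid u:v^{(k)}$ and $x_l$ preceding $x_m$; there is no way to force $x_l=x_t$, and the configuration $x_m\notin\supp(v^{(k)}:u)$ (for instance $x_m$ a variable with $\deg_{x_m}(u)=\deg_{x_m}(v^{(k)})$, or a variable of $\supp(u:v^{(k)})$ that still divides $v^{(k)}$) cannot in general be ruled out by positional constraints alone. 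So the walk you want may stall, and your invariants (decrease of $d$, preservation of $\deg_{x_s}$, shrinking of $\supp(v^{(k)}:u)$), while correctly verified, are beside the point: the existence of each step is the whole difficulty.

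For comparison, the paper's proof does not try to rule out the bad configuration; it absorbs it. One takes a putative counterexample $v$ chosen doubly extremal (first maximizing $\deg_{x_1}(u:v)$, then taking $u:v$ lexicographically minimal among those), uses Lemma \ref{pure} to see $|\supp(u:v)|\geq 2$, and applies linear quotients for one carefully built ordering to get $w=x_iv/x_j$ with $x_i\mid u:v$. In the troublesome case where $x_j\notin\supp(v:u)$ one shows $u:w$ is strictly smaller (or has larger $x_1$-degree), so $w$ cannot itself be a counterexample by the extremal choice; since $\deg_{x_1}(w)<\deg_{x_1}(u)$, the exchange property holds for $w$, and the exchanged variable is then shown to also work for $v$, a contradiction. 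If you want to salvage your approach, you would need to replace your single-step claim by an argument of this kind — e.g.\ an induction on $\deg_{x_s}(u:v)$ together with lexicographic minimality of $u:v$ — rather than hoping to pin down $x_l$ and $x_m$ by the choice of ordering alone.
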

\begin{proof}
The implication (a) $\implies$ (b) holds by Proposition
\ref{proplex}.

(b)$\implies$ (a): Let $u\in G(I)$ with $\deg_{x_1}(u)>0$. We want
to show that for any monomial $v\in G(I)$ with
$\deg_{x_1}(u)>\deg_{x_1}(v)$ we have the following exchange
property:
 $$(u/x_1)x_i\in I \;\text{for some}\; i\; \text{with}\; \deg_{x_i}(v)>\deg_{x_i}(u).$$
Let $A$ be the set of those monomials $v\in G(I)$ such that
$\deg_{x_1}(v)<\deg_{x_1}(u)$ and $(u/x_1)x_i\notin I$, for all $i$
with $\deg_{x_i}(v)> \deg_{x_i}(u)$. We prove by contradiction that
$A$ is an empty set. Assume the opposite that $A$ is not empty. Let
$v_1\in A$ such that $\deg_{x_1}(u:v_1)\geq \deg_{x_1}(u:v)$, for
all $v\in A$. Now let
$$B=\{v\in A\;|\;\deg_{x_1}(u:v)=\deg_{x_1}(u:v_1)\}.$$
Consider the lexicographical order $>_{lex}$, induced by
$x_1>_{lex}\cdots>_{lex}x_n$. Let $v\in B$ such that
$$u:v=\min_{>_{lex}}\{u:v\;|\;\;v\in B\}=\min_{>_{lex}}\{u:v\;|\;\;v\in
A\;,\; \deg_{x_1}(u:v)=\deg_{x_1}(u:v_1)\}.$$  By Lemma \ref{pure},
we know that $|\supp(u:v)|\geq 2$. Let $t+1=\min\{i\geq
2\;;x_i|u:v\}$. Consider the lexicographical order $\succ_{lex}$,
induced by
$$x_{j_r}\succ_{lex}\cdots x_{j_1}\succ_{lex}x_{t+1}\succ_{lex}\cdots\succ_{lex}x_n\succ_{lex}x_{i_s}\succ_{lex}\cdots x_{i_1}\succ_{lex}x_1,$$
where
$$\{x_{i_1},\ldots,x_{i_s}\}=\{x_l\;|\;2\leq l\leq t \;,\;
\deg_{x_l}(v)>\deg_{x_l}(u)\},$$
$$\{x_{j_1},\ldots,x_{j_r}\}=\{x_l\;|\;2\leq l\leq t \;,\;
\deg_{x_l}(v)=\deg_{x_l}(u)\}.$$ Since $x_{t+1}|u:v$ and
$\deg_{x_l}(u)=\deg_{x_l}(v)$ for all $l\in\{j_1,\ldots,j_r\}$, it
follows that $u\succ_{lex}v$. By linear quotient property, there
exists a monomial $w\in G(I)$, such that $w\succ_{lex}v$,  $w:v=x_i$
and $x_i|u:v$ for some $i$. In particular $w=x_iz$ and  $v=x_jz$ for
some monomial $z$ and $x_i\succ_{lex}x_j$. Since $x_1$ is the
smallest variable with respect to $\succ_{lex}$, we have $i\neq 1$.
Since $u:v=u:(x_jz)=(u:z):x_j$, it follows that either $u:v=u:z$ if
$x_j\nmid u:z$ or $u:v=\frac{u:z}{x_j}$ if $x_j|u:z$. Now since
$u:w=(u:z):x_i$, it follows that either (i) $u:w=\frac{u:v}{x_i}$ or
(ii) $u:w=\frac{u:v}{x_i}x_j$.

Case (i): Let $u:w=\frac{u:v}{x_i}$. Then $u:v>_{lex}u:w$ and since
$i\neq 1$, it follows that $\deg_{x_1}(u:w)=\deg_{x_1}(u:v)$.
Therefore $w\notin A$.

Case (ii): Let $u:w=\frac{u:v}{x_i}x_j$. If $j=1$, then
$\deg_{x_1}(u:w)=\deg_{x_1}(u:v)+1>\deg_{x_1}(u:v)$. Hence $w\notin
A$. Now let  $j\geq 2$. Since $x_i|u:v$ and $i\neq 1$, it follow
that  $i\in\{t+1,\ldots,n\}$. On the other hand, since $x_j|u:w$, it
follows that $\deg_{x_j}(u)>\deg_{x_j}(w)=\deg_{x_j}(z)$, so
$\deg_{x_j}(u)\geq \deg_{x_j}(z)+1=\deg_{x_j}(v)$. Hence, since
$x_i\succ_{lex}x_j$ and $i\in\{t+1,\ldots,n\}$, we have
$j\in\{t+1,\ldots,n\}$ and  $x_i>_{lex}x_j$. Therefore $u:v>u:w$.
Now since $\deg_{x_1}(u:w)=\deg_{x_1}(u:v)$, it follows that
$w\notin A$.

In both cases (i) and (ii), we show $w\notin A$. On the other hand
$\deg_{x_1}(u)>\deg_{x_1}(v)\geq\deg_{x_1}(z)=\deg_{x_1}(w)$.
Therefore $(u/x_1)x_s\in I$, for some $s$ with
$\deg_{x_s}(w)>\deg_{x_s}(u)$. Since
$\deg_{x_i}(w)=\deg_{x_i}(z)+1=\deg_{x_i}(v)+1\leq \deg_{x_i}(u)$,
it follows that $i\neq s$. So
$\deg_{x_s}(v)\geq\deg_{x_s}(z)=\deg_{x_s}(w)>\deg_{x_s}(u)$. It
contradicts our assumption $v\in A$.

Replacing $x_1$ with $x_i$, the same argument proves the exchange
property for $I$.
\end{proof}

In the following, we discuss several special cases where the above
result is true, replacing the lexicographical order with the reverse
lexicographical order.

 The following result holds immediately by \cite[Corollary
2.5]{BaHe}.

\begin{rem}\label{two variable}
 Let $I\subset K[x_1,x_2]$ be a
monomial ideal generated in a single degree. Then the following
conditions are equivalent:
\begin{enumerate} [\upshape (a)]
  \item $I$ is polymatroidal.
  \item $I$ has linear quotients with respect to the reverse lexicographical ordering of the minimal
   generators induced by every ordering of variables.
   \item $I$ has a linear resolution.
\end{enumerate}
\end{rem}

\begin{prop}\label{quadratic}
Let $I\subset S$ be a monomial ideal generated in degree $2$. Then
the following conditions are equivalent:
\begin{enumerate} [\upshape (a)]
  \item $I$ is polymatroidal.
  \item $I$ has linear quotients with respect to the reverse lexicographical ordering of the minimal
   generators induced by every ordering of variables.
\end{enumerate}
\end{prop}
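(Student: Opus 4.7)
The implication (a)$\Rightarrow$(b) is immediate from the remark preceding Proposition~\ref{proplex} (polymatroidal ideals have linear quotients with respect to reverse lex under every ordering of variables). For (b)$\Rightarrow$(a) I argue by contrapositive: starting from a failure of the polymatroidal exchange property, I will exhibit a variable ordering for which reverse lex linear quotients fails.

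\textbf{Normal form of the failure.} Suppose $I$ is not polymatroidal. Then there exist $u,v\in G(I)$ and an index $p$ with $\deg_{x_p}(u)>\deg_{x_p}(v)$ such that $x_j(u/x_p)\notin I$ for every $j$ with $\deg_{x_j}(v)>\deg_{x_j}(u)$. Since $\deg u=\deg v=2$, $u$ is either $x_p^2$ or $x_p x_k$ with $k\neq p$. A short case check determines the admissible shape of $v$: if $u=x_p^2$ then $v$ cannot involve $x_p$ (otherwise $v=x_p x_r$ would give the exchange via $j=r$), so $v=x_r x_s$ with $r,s\neq p$ and $x_px_r,\ x_px_s\notin I$; if $u=x_p x_k$ then $v$ can contain neither $x_p$ nor $x_k$ (the cases $v=x_k^2$ and $v=x_k x_s$ both resolve directly via $j=k$ or $j=s$), so $v=x_r x_s$ with $r,s\notin\{p,k\}$ and $x_k x_r,\ x_k x_s\notin I$.

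\textbf{Choice of ordering.} The strategy is to pick a variable ordering so that $v$ reappears as a minimal non-variable generator of the colon $J:u$, where $J$ is the ideal generated by the reverse-lex predecessors of $u$. For Case 1 ($u=x_p^2$), I place $x_p$ at the bottom of the variable order; then $u$ is the last generator in reverse lex, so $J=(G(I)\setminus\{u\})$ and the degree-one part of $J:u$ is exactly $\{x_c : x_p x_c\in G(I)\}$. The assumption $x_px_r,x_px_s\notin I$ keeps $x_r,x_s$ out of this set, so $v$ survives as a minimal degree-two generator of $J:u$, contradicting linear quotients. For Case 2 ($u=x_p x_k$), I place $x_p$ at the bottom and $x_k$ at the top. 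Then $u$ is the reverse-lex maximum among generators divisible by $x_p$, so the predecessors of $u$ are exactly the generators not involving $x_p$, and the degree-one part of $J:u$ becomes $\{x_c : x_k x_c\in G(I),\ c\neq p\}$. Once more $x_k x_r,\ x_k x_s\notin I$ excludes $x_r$ and $x_s$, so $v$ is a minimal non-variable generator of $J:u$.

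\textbf{Main obstacle.} The delicate point is Case 2: if any other generator $x_p x_c$ preceded $u$, it would contribute $x_c$ to $J:u$, and if $c\in\{r,s\}$ this extra variable would divide $v$ and destroy the obstruction. Placing $x_k$ at the top of the variable order is the precise device that forces $u=x_p x_k$ to precede every other $x_p$-divisible generator, thereby closing this gap. Everything else reduces to routine reverse-lex comparisons of degree-two monomials and the standard formula for the colon of a monomial ideal.
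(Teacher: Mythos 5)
Your argument is correct and, up to contraposition, it is essentially the paper's proof: both split on whether $u$ is a pure power $x_p^2$ or a product $x_px_k$ of two distinct variables and tailor the variable ordering to the violating pair $(u,v)$. The only cosmetic differences are that you always inspect the colon at $u$ (placing $x_p$ smallest, and additionally $x_k$ largest in the second case) and exhibit $v$ as a generator of that colon not divisible by any variable lying in it, whereas the paper invokes the linear-quotient property directly to produce the exchange monomial $w$, working in its first case with the colon at $v$ under the ordering in which $x_i$ is largest.
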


\begin{proof}
The implication (a) $\implies$ (b) is known.

(b) $\implies$ (a):   Let $u,v\in G(I)$ with
$\deg_{x_i}(u)>\deg_{x_i}(v)$. If $x_i|v$, there is nothing to
prove. Otherwise, we consider the following cases:

Case (i): Let $u=x_i^2$ and $v=x_jx_l$ (it can be $j=l$), so
$u:v=x_i^2$. Assume that $x_i>x_d$ for each $d\neq i$, so  $u>v$.
Hence by the assumption there exists $w\in G(I)$, such that $w>v$
and $w:v=x_i$. Hence either $w=x_ix_j=(u/x_i)x_j\in G(I)$ or
$w=x_ix_l=(u/x_i)x_l\in G(I)$.

Case (ii): Let $u=x_ix_t$ and $v=x_jx_l$ (it can be $j=l$). If
either $t=j$ or $t=l$, there is nothing to prove. Otherwise, assume
that $x_t>x_l>x_i$ and  $x_t>x_j>x_i$.  So $v>u$ and $v:u=x_jx_l$.
Hence by the assumption there exists $w\in G(I)$, such that $w>u$
and  we have either $w:u=x_j$ or $w:u=x_l$. If $w:u=x_j$, then
$w=x_tx_j=(u/x_i)x_j\in G(I)$. If $w:u=x_l$, then
$w=x_tx_l=(u/x_i)x_l\in G(I)$.
\end{proof}

In the following we recall the monomial localization concept, which
we will use it in the next proposition.

\begin{defn} Let $P$ be a monomial prime ideal of $S=K[x_1,\ldots,x_n]$.
Then $P=P_C$ for some subset $C\subset [n]$, where $P_C=(\{x_i\:\;
i\not\in C\})$
 and  $IS_P=JS_P$ where $J$ is the monomial ideal  obtained from $I$ by the substitution $x_i\mapsto 1$
 for all $i\in C$. We call $J$ the \emph{monomial localization} of $I$ with respect to $P$ and denote it by $I(P)$.
 \end{defn}

\begin{prop}\label{pure power}
Let $I\subset K[x_1,\ldots,x_n]$ be a monomial ideal generated in
degree $d$ and suppose that $I$ contains at least $n-1$ pure powers
of the variables, say $x_1^d,\ldots,x_{n-1}^d$. Then the following
conditions are equivalent:
\begin{enumerate}[\upshape (a)]
  \item $I$ is polymatroidal.
  \item $I$ has linear quotients with  respect to the reverse lexicographical ordering
  of the minimal generators induced by every ordering of variables.
\end{enumerate}
\end{prop}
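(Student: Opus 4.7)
The implication (a) $\Rightarrow$ (b) is the Herzog--Takayama result already recalled, so I focus on (b) $\Rightarrow$ (a). To check the polymatroid exchange property, fix $u,v \in G(I)$ and an index $i$ with $\deg_{x_i}(u) > \deg_{x_i}(v)$; one must find $j$ with $\deg_{x_j}(u) < \deg_{x_j}(v)$ and $(u/x_i)x_j \in I$. I would argue by contradiction, in the style of Theorem \ref{thmlex}: assume no such $j$ exists for some triple $(u,v,i)$, and among violating triples choose one with $\deg_{x_i}(u:v)$ maximal and then $u:v$ minimal in a suitable lex order.

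The key new input is the pure powers. For any ordering of the variables placing some $x_k$ with $k \in \{1,\ldots,n-1\}$ as the smallest variable, the generator $x_k^d$ is the unique least element of $G(I)$ in the induced revlex order. The linear quotients condition applied at $x_k^d$ therefore forces the colon ideal $(G(I)\setminus\{x_k^d\}):x_k^d$ to be generated by variables, and a short computation translates this into the statement that for each $w \in G(I)\setminus\{x_k^d\}$ there exists a variable $x_l \neq x_k$ with $x_l\mid w$ and $x_k^{d-1}x_l \in I$. Varying $k$ over $\{1,\ldots,n-1\}$ and the ordering of the remaining variables produces a rich family of near-pure-power monomials inside $I$, which I expect to be the main new tool used to build the desired exchange.

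From here I plan to split into two cases. Case (i), $i \in \{1,\ldots,n-1\}$: place $x_i$ last in a revlex ordering; then $v > u$ in revlex and linear quotients produces an exchange $w = (u/x_t)x_l \in G(I)$ with $x_l \mid v:u$, $x_t \mid u$, and $x_l > x_t$ in the chosen order. If $t=i$ we are done; otherwise the near-pure-power relations, combined with the minimality of the counterexample (as in the pivoting argument at the end of the proof of Theorem \ref{thmlex}), should force either a strictly smaller counterexample or a direct contradiction. Case (ii), $i=n$ with $x_n^d \notin G(I)$: handled similarly but with $x_n$ placed second-to-last, using the pure powers $x_k^d$ ($k \neq n$) as the anchors.

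The step I expect to be the main obstacle is precisely pinning the pivot variable $t$ of the exchange produced by linear quotients to equal $i$: without the pure powers there is no such control, which is why the general conjecture remains open. The near-pure-power relations and the minimality of $(u,v)$ should together constrain the admissible chains enough to force $t=i$, but the bookkeeping needed to ensure that the exchanged variable $x_l$ stays inside the allowed set $\{j : \deg_{x_j}(u)<\deg_{x_j}(v)\}$ throughout the chaining is delicate. Case (ii) is the one I expect to require the most care, since the pivot variable $x_n$ itself has no pure power available and all the anchoring has to be done by auxiliary near-pure-power monomials coming from the other variables.
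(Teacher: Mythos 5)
Your proposal does not actually contain a proof of (b) $\Rightarrow$ (a); it is a plan whose decisive steps are left as expectations. You set up a minimal-counterexample scheme modelled on Theorem \ref{thmlex}, extract from the pure powers the fact that for each $w\in G(I)$ there is a variable $x_l\mid w$, $l\neq k$, with $x_k^{d-1}x_l\in I$, and then state that the near-pure-power relations ``should force'' the pivot of the exchange produced by linear quotients to equal $i$, explicitly flagging this as the main obstacle and flagging the case $i=n$ (where no pure power is available) as even more delicate. That pivot-pinning step is precisely the content one has to prove, and the degree-one-off-from-pure-power monomials you derive are far too weak on their own to control it: they say nothing about generators whose exponent vector is far from a pure power, which is exactly where the exchange property can fail. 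As written, the argument has a genuine gap at its core, in both cases (i) and (ii).

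For contrast, the paper avoids verifying the exchange property directly. Using only orderings in which $x_n$ is the largest variable and the pure powers $x_i^d$ ($i\le n-1$) as anchors, it chains the linear-quotients condition repeatedly to show that, with $k=\max\{\deg_{x_n}(u): u\in G(I)\}$, every monomial $x_{i_1}^{\alpha_{i_1}}\cdots x_{i_h}^{\alpha_{i_h}}x_n^k$ with $\alpha_{i_1}+\cdots+\alpha_{i_h}=d-k$ lies in $G(I)$. Hence the monomial localization $I(P_{\{n\}})$ equals $(x_1,\ldots,x_{n-1})^{d-k}$, which has a linear resolution, and polymatroidality then follows at once from \cite[Proposition 2.4]{BaHe}. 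If you want to salvage your direct approach you would have to supply the missing combinatorial control on the pivot; otherwise the natural fix is to redirect the chaining you began (exchanging against $x_i^d$) toward computing $I(P_{\{n\}})$ and invoking the Bandari--Herzog criterion, as the paper does.
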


\begin{proof}
The implication (a) $\implies$ (b) is known.

(b)$\implies$ (a): Let $k=max\{\deg_{x_n}(u) \;|\; u\in G(I)\}$ and
$w=x_1^{a _1}\cdots x_{n-1}^{ a_{n-1}}x_n^k\in G(I)$. We want to
show that $x_i^{d-k}x_n^k\in G(I)$ for all $1\leq i\leq n-1$. We fix
$i$. If $w=x_i^{d-k}x_n^k$, then there is nothing to prove.
Otherwise, for monomial $v$, we define $\deg_A(v)=\sum_{i\in
A}\deg_{x_i}(v)$, where $A:=\supp(w)\setminus\{x_i,x_n\}$. We
consider the ordering $x_n>x_i>x_j$ for all $j\neq i,n$. Since
$x_i^d>w$ and $x_i^d:w=x_i^{d-a_i}$, it follows by the assumption
that there exists $w_1\in G(I)$ such that $w_1>w$ and $w_1:w=x_i$.
Hence $w_1=(w/x_{j_1})x_i$, where $j_1\in A$,
$\deg_{x_i}(w_1)=a_i+1$ and $\deg_A(w_1)=\deg_A(w)-1$. Now since
$x_i^d:w_1=x_i^{d-(a_i+1)}$, it follows that there exists $w_2\in
G(I)$ such that $w_2>w_1$ and $w_2:w_1=x_i$. Hence
$w_2=(w_1/x_{j_2})x_i$, where $j_2\in A$, $\deg_{x_i}(w_2)=a_i+2$
and $\deg_A(w_2)=\deg_A(w)-2$. Continuing in the same way, there
exists $w_{h-1}\in G(I)$ where $h:=d-k-a_i$,
$\deg_{x_i}(w_{h-1})=a_i+h-1$ and  $x_i^d:w_{h-1}=x_i^{k+1}$. So
there exists $w_h\in G(I)$ such that $w_h>w_{h-1}$ and
$w_h:w_{h-1}=x_i$. Hence $w_h=(w_{h-1}/x_{j_h})x_i$ where $j_h\in
A$, $\deg_{x_i}(w_h)=a_i+h=d-k$ and $\deg_A(w_h)=\deg_A(w)-h=0$.
Therefore $w_h=x_i^{d-k}x_n^k$.

Now, by induction on $h$ we show $x_{i_1}^{\alpha_{i_1}}\cdots
x_{i_{h-1}}^{\alpha_{i_{h-1}}}x_{i_h}^{\alpha_{i_h}}x_n^k\in G(I),$
where $\alpha_{i_1}+\cdots+\alpha_{i_{h-1}}+\alpha_{i_h}=d-k$. For
$h=1$, we have already proved it. Now assume that $h>1$. We set
$u:=x_{i_1}^{\alpha_{i_1}}\cdots
x_{i_{h-1}}^{\alpha_{i_{h-1}}+\alpha_{i_h}}x_n^k$ and
$v:=x_{i_1}^{\alpha_{i_1}}\cdots
x_{i_{h-2}}^{\alpha_{i_{h-2}}}x_{i_h}^{\alpha_{i_{h-1}}+\alpha_{i_h}}x_n^k$.
By induction hypothesis, $u,v\in G(I)$. Assume that
$x_n>x_{i_1}>x_{i_2}>\cdots >x_{i_{h-1}}>x_{i_h}$. Since $u>v$ and
$u:v=x_{i_{h-1}}^{\alpha_{i_{h-1}}+\alpha_{i_h}}$, it follows that
there exists $w_1\in G(I)$ such that $w_1>v$ and
$w_1:v=x_{i_{h-1}}$. Hence $w_1=x_{i_1}^{\alpha_{i_1}}\cdots
x_{i_{h-2}}^{\alpha_{i_{h-2}}}
x_{i_{h-1}}x_{i_h}^{\alpha_{i_{h-1}}+\alpha_{i_h}-1}x_n^k$. Now,
since $u:w_1=x_{i_{h-1}}^{\alpha_{i_{h-1}}+\alpha_{i_h}-1}$, there
exists $w_2>w_1$ such that $w_2:w_1=x_{i_{h-1}}$. Hence
$w_2=x_{i_1}^{\alpha_{i_1}}\cdots x_{i_{h-2}}^{\alpha_{i_{h-2}}}
x_{i_{h-1}}^2x_{i_h}^{\alpha_{i_{h-1}}+\alpha_{i_h}-2}x_n^k$.
Continuing in the same way,  there exists $w_{\alpha_{i_{h-1}}}\in
G(I)$ such that $w_{\alpha_{i_{h-1}}}=x_{i_1}^{\alpha_{i_1}}\cdots
x_{i_{h-2}}^{\alpha_{i_{h-2}}}
x_{i_{h-1}}^{\alpha_{i_{h-1}}}x_{i_h}^{\alpha_{i_h}}x_n^k$, as
desired.

By what we have shown, it follows that
$I(P_{\{n\}})=(x_1,\ldots,x_{n-1})^{d-k}$. Hence $I(P_{\{n\}})$ has
a linear resolution. So by \cite[Proposition 2.4]{BaHe}, $I$ is
polymatroidal.
\end{proof}

\begin{prop}\label{dimthree} Let $I\subset S=K[x_1,x_2,x_3]$ be a monomial
ideal generated in a single degree. The following conditions are
equivalent:
\begin{enumerate}[\upshape (a)]
\item $I$ is polymatroidal.
\item $I$ has linear quotients  with respect to the reverse lexicographical ordering
  of the minimal generators induced by every ordering of variables.
\end{enumerate}
\end{prop}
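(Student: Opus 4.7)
The direction (a)$\implies$(b) is classical. For (b)$\implies$(a), the strategy is to verify the exchange property of polymatroidal ideals directly. Fix a pair $u,v\in G(I)$ with $\deg_{x_i}(u)>\deg_{x_i}(v)$; by a symmetric relabelling we may take $i=1$ and write $u=x_1^{a_1}x_2^{a_2}x_3^{a_3}$, $v=x_1^{b_1}x_2^{b_2}x_3^{b_3}$. Set $A:=\{j\in\{2,3\}:a_j<b_j\}$, which is non-empty because $u$ and $v$ have the same degree. When $|A|=1$, say $A=\{2\}$, one has $v:u=x_2^{b_2-a_2}$, and applying linear quotients under the reverse-lexicographical order induced by $x_3>x_2>x_1$ (for which $v>u$) produces some $w\in G(I)$ with $w:u=x_2$. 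Writing $w=ux_2/x_k$ and checking which $k$ is compatible with $w>u$ in that order, only $k=1$ survives, giving the exchange $(u/x_1)x_2\in I$. The case $A=\{3\}$ is handled symmetrically under the ordering $x_2>x_3>x_1$.

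The main difficulty is the case $|A|=2$, i.e.\ $a_2<b_2$ and $a_3<b_3$ simultaneously. The edge cases $a_2=0$ and $a_3=0$ reduce to direct LQ enumerations under the orderings $x_3>x_2>x_1$ and $x_2>x_3>x_1$ respectively. For the generic range $a_2,a_3\geq 1$, I would induct on $a_2$. Applying LQ under $x_3>x_2>x_1$ to $(u,v)$ and enumerating the candidates $w=ux_j/x_k$, one obtains exactly three possibilities: $(u/x_1)x_2\in I$, $(u/x_1)x_3\in I$, or $u':=(u/x_2)x_3\in G(I)$. Only the last requires further work. Passing to the pair $(u',v)$: if $a_3+1=b_3$, then $(u',v)$ is of type $|A|=1$ and the analysis above yields $(u'/x_1)x_2\in I$, which coincides with the desired $(u/x_1)x_3\in I$. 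Otherwise $a_3+1<b_3$, and the inductive hypothesis (applicable since $\deg_{x_2}(u')=a_2-1$) provides an exchange $(u'/x_1)x_j\in I$ for some $j\in\{2,3\}$. The sub-case $j=2$ immediately gives $(u/x_1)x_3\in I$.

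The subtle sub-case is $j=3$, which only furnishes $\gamma:=x_1^{a_1-1}x_2^{a_2-1}x_3^{a_3+2}\in G(I)$ rather than the target exchange. The principal obstacle is to convert this $\gamma$ back into an exchange for the original pair $(u,v)$. Here I would exploit the hypothesis that LQ holds for \emph{every} variable ordering by applying it to $(u,\gamma)$ under the ordering $x_2>x_3>x_1$: one has $\gamma>u$ and $\gamma:u=x_3^2$, so LQ yields some $w\in G(I)$ with $w>u$ and $w:u=x_3$. Enumerating $w=ux_3/x_k$ against this order shows that $k=2$ forces $w<u$ and $k=3$ forces $w=u$; hence only $k=1$ is admissible, giving $(u/x_1)x_3\in I$ and closing the inductive step. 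This dual use of LQ under two different variable orderings — first to force $\gamma\in I$ and then to recover the required exchange from $\gamma$ — is precisely where the hypothesis of LQ for every variable ordering is essential, not just a single one. The cases $i=2$ and $i=3$ of the exchange property then follow from the $i=1$ case by permuting the roles of the variables.
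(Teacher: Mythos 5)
Your proof is correct, and in the hard case it takes a genuinely different route from the paper. Where you agree: when $v:u$ is a pure power of one variable (your $|A|=1$ situation), both you and the paper get the exchange from a single application of linear quotients at $u$ under an ordering in which $x_1$ is the smallest variable; the paper, however, does this only when $\deg_{x_3}(u)>\deg_{x_3}(v)$, and in the case $\deg_{x_3}(u)=\deg_{x_3}(v)$ it instead runs an ascending chain starting from $v$ under the ordering $x_3>x_1>x_2$, which your one-step argument renders unnecessary. The real divergence is the mixed case $a_2<b_2$, $a_3<b_3$: the paper fixes the single ordering $x_1>x_3>x_2$ and builds a chain $w_1,w_2,\ldots$ from $v$ with $w_{m+1}:w_m=x_1$, asserting $u:w_m=x_1^{r-m}$ until a monomial of the form $(u/x_1)x_j$ is reached; you instead induct on $\deg_{x_2}(u)$, using the ordering $x_3>x_2>x_1$ either to obtain the exchange outright or to replace $u$ by $u'=(u/x_2)x_3\in G(I)$, and then a second ordering $x_2>x_3>x_1$ to convert the inductive outcome $\gamma=(u'/x_1)x_3$ back into $(u/x_1)x_3\in I$. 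Your version is longer, but each step is completely forced by the order-compatibility checks, and it sidesteps the one delicate point of the published chain argument: the invariant $u:w_m=x_1^{r-m}$ is not automatic beyond $m=1$, since linear quotients may hand you a step dividing by $x_2$ or $x_3$ when the corresponding exponent of $w_m$ already equals that of $u$, so that step of the paper needs extra justification which your induction avoids. Two minor remarks: the separate edge case $a_3=0$ is not actually needed (only $a_2\geq 1$ is used, to form $u'$), and it would be worth stating the inductive statement explicitly (the exchange claim for all pairs $(\tilde u,v)$ of minimal generators with both $x_2$- and $x_3$-exponents of $\tilde u$ strictly below those of $v$ and $\deg_{x_2}(\tilde u)<a_2$), since that is precisely what you invoke for the pair $(u',v)$.
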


\begin{proof}
The implication (a) $\implies$ (b) is known.

(b)$\implies$ (a): Let $\deg_{x_1}(u)>\deg_{x_1}(v)$ and
$\deg_{x_2}(u)<\deg_{x_2}(v)$.
 We have two cases:
\begin{enumerate}[\upshape (i)]
\item  If $u:v=x_1^rx_3^s$ for integers  $r,s>0$. Then $v:u=x_2^t$
for integer $t>0$. We consider the ordering $x_3>x_2>x_1$, so $v>u$.
Hence  by the assumption there exists $w\in G(I)$ such that $w>u$
and $w:u=x_2$. So $w=(u/x_1)x_2\in G(I)$.
\item If $u:v=x_1^r$ for integer $r>0$.
\begin{enumerate}[\upshape -]
\item Suppose $\deg_{x_3}(u)=\deg_{x_3}(v)$.  We assume that
$x_3>x_1>x_2$, so $u>v$. Hence  by the assumption there exists
$w_1\in G(I)$ such that $w_1>v$ and $w_1:v=x_1$. So
$w_1=(v/x_2)x_1$. Now since $u>w_1$ and $u:w_1=x_1^{r-1}$, it
follows that there exists $w_2>w_1$ such that $w_2:w_1=x_1$. Hence
$w_2=(w_1/x_2)x_1$. So $u>w_2$ and $u:w_2=x_1^{r-2}$. Continuing in
the same way, there exists $w_{r-1}=(w_{r-2}/x_2)x_1\in G(I)$, such
that $u> w_{r-1}$ and $u:w_{r-1}=x_1$. Thus $u=(w_{r-1}/x_2)x_1$.
Hence $(u/x_1)x_2=w_{r-1}\in G(I)$.

\item Suppose $\deg_{x_3}(u)<\deg_{x_3}(v)$. We assume that
$x_1>x_3>x_2$, so $u>v$. Hence by the assumption there exists
$w_1\in G(I)$ such that $w_1>v$ and $w_1:v=x_1$. So either
$w_1=(v/x_2)x_1$ or $w_1=(v/x_3)x_1$. Now since $u>w_1$ and
$u:w_1=x_1^{r-1}$, it follows that there exists $w_2>w_1$ such that
$w_2:w_1=x_1$. So either $w_2=(w_1/x_2)x_1$ or $w_2=(w_1/x_3)x_1$.
Hence $u>w_2$ and $u:w_2=x_1^{r-2}$. Continuing in the same way,
there exists $w_{r-1}\in G(I)$, such that $u>w_{r-1}$ and
$u:w_{r-1}=x_1$. Hence either $u=(w_{r-1}/x_2)x_1$ or
$u=(w_{r-1}/x_3)x_1$. Therefore either $(u/x_1)x_2\in G(I)$ or
$(u/x_1)x_3\in G(I)$.
\end{enumerate}
\end{enumerate}
\end{proof}

 Let $M_d$ denote the set of all monomials of degree $d$ in the
polynomial ring $S=k[x_1,\ldots,x_n]$.
 We order the monomials
lexicographically by the ordering $x_1 >_{lex} x_2 >_{lex}\cdots
>_{lex} x_n$.

\begin{prop}\label{lex}
Let $v=x_1^{b_1}\cdots x_n^{b_n}\in M_d$ and $I=(u\in M_d\;|\;
u\geq_{lex} v)$ be a monomial ideal in $S=K[x_1,\ldots,x_n]$. Then
the following conditions are equivalent:
\begin{enumerate}[\upshape (a)]
  \item $I$ is polymatroidal.
  \item $I$ has linear quotients with  respect to the reverse lexicographical ordering
  of the minimal generators induced by every ordering of variables.
\end{enumerate}
\end{prop}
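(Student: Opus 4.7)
The implication (a) $\implies$ (b) is already recorded in the paper as a consequence of the Herzog--Takayama result that polymatroidal ideals have linear quotients with respect to the reverse lex ordering induced by every ordering of variables.

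For (b) $\implies$ (a), I argue by contrapositive. Suppose $I=(u\in M_d\mid u\geq_{lex} v)$ is not polymatroidal, where $v=x_1^{b_1}\cdots x_n^{b_n}$. Since $I$ is a lex ideal it is strongly stable (Borel), and the Borel property supplies the polymatroidal exchange whenever one can shift from $x_i$ to an $x_j$ with $j<i$. Therefore the failure of polymatroidality can be sharpened to a witness $u\in G(I)$ and index $i$ satisfying
\[
u_j=b_j \text{ for } j<i,\qquad u_i=b_i+1,\qquad x_k(u/x_i)\not\in I \text{ for every } k>i \text{ with } b_k>u_k.
\]
These constraints come from the lex-initial structure: if $u_{j_0}>b_{j_0}$ for some $j_0<i$, or if $u_i>b_i+1$, then $x_k(u/x_i)$ would exceed $v$ at an early coordinate and the exchange would succeed; the third constraint is the failure-against-$v$ form of the exchange failure (one may reduce to $u'=v$ by replacing the original witness).

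I now invoke hypothesis (b) with the variable ordering $\sigma\colon x_n>x_{n-1}>\cdots>x_1$. Under $\sigma$, the reverse lex order on monomials of $S$ coincides with the reverse of the standard lex order, so listing $G(I)$ in reverse lex with respect to $\sigma$ runs from $v$ (lex-smallest) up to $x_1^d$ (lex-largest). Hypothesis (b) at $u$ forces the colon ideal $J=(w\in G(I):w<_{lex} u):u$ to be generated by variables. But $v/\gcd(v,u)=\prod_{j:\,b_j>u_j}x_j^{\,b_j-u_j}$ lies in $J$, so some $x_k$ with $b_k>u_k$ must itself lie in $J$. In turn, $x_k\in J$ demands a lex-smaller $w\in G(I)$ of the form $w=x_k(u/x_\ell)$ with $\ell\in\supp(u)$, $\ell<k$, and $w\geq_{lex} v$. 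Under the constraints above, the case $\ell<i$ forces $w_\ell=b_\ell-1<b_\ell$, hence $w<_{lex} v$ (not in $I$); the case $\ell=i$ gives $w=x_k(u/x_i)$, assumed not in $I$; and the case $i<\ell<k$ requires $\ell\in\supp(u)\cap\{i+1,\ldots,k-1\}$, which can be arranged empty by a further refinement of the witness. Hence no such $x_k$ lies in $J$, contradicting (b).

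The principal technical step is this witness refinement: starting from any non-polymatroidal failure, one iterates Borel-type exchanges to produce a witness $u$ with $\supp(u)\cap\{i+1,\ldots,k-1\}=\emptyset$ for the bad index $k$. This refinement preserves non-polymatroidality while trimming the support of $u$ in the ``Borel shortcut'' range. Once this refinement is in place, the failure of the polymatroidal exchange against $v$ translates directly into the failure of linear quotients at $u$ in the chosen revlex ordering.
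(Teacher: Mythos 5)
Your reduction of a polymatroidality failure to a witness $(u,i)$ with $u_j=b_j$ for $j<i$ and $u_i=b_i+1$ is sound (the Borel property of the lex segment does force this), and your colon computation under the ordering $x_n>x_{n-1}>\cdots>x_1$ is correct: if $u>_{lex}v$ then $v:u$ lies in the colon ideal of the predecessors, so some variable $x_k$ with $b_k>u_k$ must lie there, and such an $x_k$ can only come from a generator $x_k(u/x_\ell)$ with $\ell\in\supp(u)$, $\ell<k$. This part is essentially the paper's Case~3 argument, where the test monomial is $x_1^{b_1+1}x_n^{d-(b_1+1)}$ and the contradiction is that $x_1^{b_1}x_{j_s}x_n^{d-(b_1+1)}$ would have to be a generator although it is lex-smaller than $v$.

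The genuine gap is the ``witness refinement'' on which everything hinges. You need a witness $u$ that (i) fails the exchange \emph{against $v$ itself} and (ii) satisfies $x_k(u/x_\ell)\notin I$ for \emph{every} $k$ with $b_k>u_k$ and every $\ell\in\supp(u)$ with $i<\ell<k$ (not just for one bad $k$, since you do not control which $x_k$ lands in the colon). Neither is proved: passing from a failure against an arbitrary $w\in G(I)$ to a failure against $v$ is only asserted, and the proposed mechanism ``iterate Borel-type exchanges'' cannot produce the support condition, because Borel moves shift exponents toward smaller indices, whereas emptying $\supp(u)\cap\{i+1,\ldots,k-1\}$ requires pushing exponents toward $x_n$; moreover such a trimmed monomial, e.g.\ $x_1^{b_1}\cdots x_{i-1}^{b_{i-1}}x_i^{b_i+1}x_n^{d-(b_1+\cdots+b_i)-1}$, need not remain a failure witness at all: when $b_{i+1}+\cdots+b_{n-1}\leq 1$ the only candidate $x_k(u/x_i)$ equals $v\in I$. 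In fact a witness of your required form exists exactly when $b_2+\cdots+b_{n-1}\geq 2$ (then $u=x_1^{b_1+1}x_n^{d-b_1-1}$, $i=1$ works), so to close the contrapositive you must prove that non-polymatroidality forces $b_2+\cdots+b_{n-1}\geq 2$, i.e.\ that $I$ is polymatroidal whenever $v=x_1^{b_1}x_n^{d-b_1}$ or $v=x_1^{b_1}x_jx_n^{d-(b_1+1)}$. That verification --- the decomposition $I=\sum_i x_1^i(x_2,\ldots,x_n)^{d-i}\,(+J)$ and the explicit exchange check --- is precisely Cases~1 and~2 of the paper's proof and is entirely absent from your write-up; as it stands, the substantive half of (b)$\implies$(a) is hidden inside the unproven refinement claim.
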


\begin{proof}
The implication (a) $\implies$ (b) is known.

(b)$\implies$ (a): We consider $x_n>x_j>x_1$ for $j\neq 1,n$. Since
$\deg_{x_1}(v)<\deg_{x_1}(x_1^{b_1+1}x_n^{d-(b_1+1)})$, we have
$v>x_1^{b_1+1}x_n^{d-(b_1+1)}$. Assume that $\n=(x_2,\ldots,x_n)$.
We have the following cases:

\textbf{Case 1.} $v:x_1^{b_1+1}x_n^{d-(b_1+1)}=x_n$. Then
$v=x_1^{b_1}x_n^{d-b_1}$. Hence
 $I=\sum_{i=b_1}^{d}x_1^{i}\n^{d-i}$, so $I$ is polymatroidal.

\textbf{Case 2.} $v:x_1^{b_1+1}x_n^{d-(b_1+1)}=x_{j}$, where $j\neq
1,n$. Hence $v=x_1^{b_1}x_{j}x_n^{d-(b_1+1)}$. Therefore
$I=\sum_{i=b_1+1}^{d}x_1^{i}\n^{d-i} +J$, where

$$J:=(w\in M_d\;|\; deg_{x_1}(w)=b_1 \;\text{and}\; w\geq_{lex} v).$$
Now we want to prove that $I$ is polymatroidal.
\begin{enumerate}[\upshape (i)]
\item
Let $w_1\in x_1^t\n^{d-t}$, where $b_1+1\leq t\leq d$ and $w_2\in
J$. Assume that $\deg_{x_i}(w_1)>\deg_{x_i}(w_2)$. If $i\neq 1$,
then  $(w_1/x_i)x_k\in I$, for all $k$. If $i=1$ and there exists
$1<h\leq j$ such that $x_h|w_1$. Then  $(w_1/x_i)x_k\in I$, for all
$k$. Otherwise,  $i=1$ and $x_h\nmid w_1$, for all  $1<h\leq j$. Now
since $w_2\geq_{lex} v$, it follows that there exist $1< l\leq j$
such that $x_l|w_2$. Hence $(w_1/x_i)x_l\in I$.
\item
Let $w_1\in J$, $w_2\in x_1^{t}\n^{d-t}$, where  $b_1+1\leq t\leq
d$. Assume $\deg_{x_i}(w_1)>\deg_{x_i}(w_2)$, so $(w_1/x_i)x_1\in
I$.
\item
Let $w_1\in J$, $w_2\in J$ and $\deg_{x_i}(w_1)>\deg_{x_i}(w_2)$.
\begin{enumerate}[\upshape -]
\item If $\deg_{x_j}(w_2)>\deg_{x_j}(w_1)$, then $(w_1/x_i)x_j\in I$.

\item If $\deg_{x_j}(w_2)<\deg_{x_j}(w_1)$ and $i\neq j$. Since
$x_j|(w_1/x_i)x_k$ for all $k$, then $(w_1/x_i)x_k\in I$ for all
$k$.

\item If  $\deg_{x_j}(w_2)<\deg_{x_j}(w_1)$, $i=j$ and $x_j|w_2$. Since
 $\deg_{x_j}(w_1)\geq 2$, then $(w_1/x_i)x_k\in I$ for all $k$.

\item If  $\deg_{x_j}(w_2)<\deg_{x_j}(w_1)$, $i=j$, $x_j\nmid w_2$ and there exists $1<l<j$ such that
$\deg_{x_l}(w_2)>\deg_{x_l}(w_1)$. Then $(w_1/x_i)x_l\in I$.

\item If  $\deg_{x_j}(w_2)<\deg_{x_j}(w_1)$, $i=j$, $x_j\nmid w_2$ and  for all $1<l<j$,
 $\deg_{x_l}(w_2)\leq\deg_{x_l}(w_1)$, then $(w_1/x_i)x_k\in I$ for all
$k$. Note that Since $x_j\nmid w_2$, then there exists $1<l'<j$ such
that $x_{l'}|w_2$ and hence $x_{l'}|w_1$.
\end{enumerate}
\end{enumerate}
\textbf{Case 3.} $v:x_1^{b_1+1}x_n^{d-(b_1+1)}=x_{j_1}\cdots
x_{j_l}$, where $l>1$ and $j_r\neq n$ for each $r=1,\ldots,l$. Then
there exists $w\in G(I)$ such that $w>x_1^{b_1+1}x_n^{d-(b_1+1)}$
and $w:x_1^{b_1+1}x_n^{d-(b_1+1)}=x_{j_s}$ for some $1\leq s\leq l$.
Now, since we consider the ordering $x_n>x_j>x_1$ for $j\neq 1,n$,
it follows that $w= x_1^{b_1}x_{j_s}x_n^{d-(b_1+1)}$. This is a
contradiction, since $w>_{lex} v$.
\end{proof}

\begin{defn}
A \textit{lexsegment} (of degree $d$) is a subset of $M_d$ of the
form
$$\mathcal{L}(u, v)=\{w\in M_d \;|\;u\geq_{lex}w\geq_{lex}v\}$$
 for some
$u,v\in M_d$ with $u\geq_{lex} v$. A lexsegment $L$ is called
\textit{completely lexsegment} if all the iterated shadows of L are
again lexsegments. An ideal spanned by a completely lexsegment is
called a completely lexsegment ideal.
\end{defn}
We recall that the shadow of a set $T$ of monomials is the set
$\Shad(T)=\{vx_i\;|\;v\in T,1\leq i\leq n\}$. The $i$-th shadow is
recursively defined as $\Shad^i(T)=\Shad(\Shad^{i-1}(T))$.

\begin{prop}\label{comlex}
Let $I\subset K[x_1,\ldots,x_n]$ be a monomial ideal generated in
degree $d$ and suppose that $I$ is a completely lexsegment ideal.
Then the following conditions are equivalent:
\begin{enumerate}[\upshape (a)]
  \item $I$ is polymatroidal.
  \item $I$ has linear quotients with  respect to the reverse lexicographical ordering
  of the minimal generators induced by every ordering of variables.
\end{enumerate}
\end{prop}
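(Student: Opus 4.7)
The implication (a) $\Rightarrow$ (b) is already known, so I would focus on (b) $\Rightarrow$ (a). Write $I=(\mathcal{L}(u,v))$ with $u=x_1^{a_1}\cdots x_n^{a_n}$ and $v=x_1^{b_1}\cdots x_n^{b_n}$ in $M_d$ satisfying $u\geq_{lex}v$. Since linear quotients imply a linear resolution, hypothesis (b) forces $I$ to have a linear resolution in addition to being completely lexsegment. The plan is to combine these two pieces of information via the Aramova--De Negri--Herzog classification of completely lexsegment ideals with a linear resolution, which restricts $(u,v)$ to a small, explicit list of shapes, and then to finish by case analysis modeled on Proposition \ref{lex}.

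The first case is $u=x_1^d$: then $I=(w\in M_d:w\geq_{lex}v)$ is precisely the initial lexsegment ideal of Proposition \ref{lex}, so that result gives polymatroidality immediately.

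In the remaining case $u\neq x_1^d$ (so $a_1<d$), the classification restricts $(u,v)$ to specific extremal shapes, typically with $u$ of the form $x_1^{a_1}x_n^{d-a_1}$ (or a similar extremal monomial) together with a controlled lower bound $v$. For each such subcase, I would verify the polymatroidal exchange property directly: take $w_1,w_2\in G(I)$ with $\deg_{x_i}(w_1)>\deg_{x_i}(w_2)$, choose a reverse-lex variable ordering analogous to the $x_n>x_j>x_1$ choices in Proposition \ref{lex}, and then apply the linear quotients hypothesis (possibly successively, as in the chained $w_1,w_2,\ldots$ arguments in Propositions \ref{pure power} and \ref{dimthree}) to produce a $j$ with $(w_1/x_i)x_j\in G(I)$ and $\deg_{x_j}(w_2)>\deg_{x_j}(w_1)$. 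The completely lexsegment hypothesis is what guarantees that the intermediate monomials produced by the linear quotients remain in $G(I)$, which is the crucial closure needed for the iteration to terminate at the desired exchange monomial.

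The main technical obstacle is the non-initial case $a_1<d$. Unlike Proposition \ref{lex}, one cannot simply reduce the analysis to a single variable ordering: one must simultaneously track membership in the lexsegment $\mathcal{L}(u,v)$, the shadow-closure condition defining \emph{completely lexsegment}, and the chosen reverse-lex ordering of variables. The classification of completely lexsegment ideals with a linear resolution is the structural input that makes this combinatorial case analysis tractable, reducing a potentially unbounded collection of configurations to a short finite list amenable to arguments in the style of the previous propositions of this section.
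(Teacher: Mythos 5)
Your opening moves match the paper's: use the hypothesis to get a linear resolution, invoke the Aramova--De Negri--Herzog classification (\cite{ArNeHe}) of completely lexsegment ideals with linear resolution, and then finish by a case analysis in the style of Proposition \ref{lex}. However, there is a genuine gap in the main case. The classification does \emph{not} reduce $(u,v)$ to ``a small, explicit list of shapes'': its two alternatives are (i) $u=x_1^ax_2^{d-a}$, $v=x_1^ax_n^{d-a}$ (in which case $I=x_1^a(x_2,\ldots,x_n)^{d-a}$ is immediately polymatroidal, no exchange check needed -- note your description ``$u$ of the form $x_1^{a_1}x_n^{d-a_1}$'' misstates this shape), and (ii) merely $b_1\leq a_1-1$, which leaves $u$ and $v$ essentially arbitrary and is where all the work lies. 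Your plan for that case (``verify the exchange property directly'' by chained applications of linear quotients) never says how to handle a general $u$ with $b_1<a_1$, and your splitting into $u=x_1^d$ versus $u\neq x_1^d$ does not by itself make the second case tractable.

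The paper's decisive step, which your proposal is missing, is to use hypothesis (b) a second time -- not just the linear resolution it implies -- to pin down $u$: assuming $a_3+\cdots+a_n\neq 0$, take the ordering $x_1>x_2>\cdots>x_n$ and the generator $w=x_1^{a_1-1}x_2^{d-a_1+1}\in I$ (it lies in $\mathcal L(u,v)$ because $b_1\leq a_1-1$); linear quotients then produces $w'\in G(I)$ with $w':u=x_2$, forcing $w'=(u/x_i)x_2$ with $i\geq 3$, hence $w'>_{\lex}u$, contradicting that $u$ is the top of the lexsegment. This shows $u=x_1^{a_1}x_2^{d-a_1}$, so $x_1^{b_1+1}x_n^{d-(b_1+1)}\in I$, and only then do the three cases of Proposition \ref{lex} (with the ordering $x_n>x_j>x_1$) apply verbatim. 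Also, your claim that the completely lexsegment hypothesis is what keeps the intermediate monomials in $G(I)$ is not the actual mechanism: the monomials $w$ with $w:v$ a variable are in $G(I)$ by the linear quotients property itself; the completely lexsegment assumption enters only through the \cite{ArNeHe} classification.
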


\begin{proof}
The implication (a) $\implies$ (b) is known.

(b)$\implies$ (a): Let $I=(\mathcal{L}(u,v))$, where
$u=x_1^{a_1}\cdots x_n^{a_n}$ and $v=x_1^{b_1}\cdots x_n^{b_n}$. By
\cite[Theorem 1.3]{ArNeHe}, since $I$ has a linear resolution, one
of the following conditions holds:
\begin{enumerate}[\upshape (i)]
\item  $u=x_1^ax_2^{d-a}$ and $v=x_1^ax_n^{d-a}$ for some $a$, $0<a\leq d$.
\item  $b_1\leq a_1-1$.
\end{enumerate}
Suppose (i) holds. Since $\deg_{x_1}(u)=\deg_{x_1}(v)$, so
$I=x_1^{a_1}(x_2,\ldots,x_n)^{d-a_1}$. Obviously $I$ is
polymatroidal.

Now let (ii) holds. Assume that $a_3+a_4+\cdots+a_n\neq 0$. We
consider the ordering $x_1>x_2>x_3>\cdots >x_n$. Suppose
$w=x_1^{a_1-1}x_2^{d-a_1+1}$. Since $w\in I$, $w>u$,
$w:u=x_2^{d-a_1-a_2+1}$ and also $d-a_1-a_2+1>1$, it follows that
there exist $w'\in I$ such that $w'>u$ and $w':u=x_2$. Hence
$w'=(u/x_i)x_2$ for some $3\leq i\leq n$. So $w'>_{lex} u$, which is
a contradiction.

Now let $a_i=0$ for $i=3,\ldots,n$. Hence $u=x_1^{a_1}x_2^{d-a_1}$
and so  $x_1^{b_1+1}x_n^{d-(b_1+1)}\in I$. We consider $x_n>x_j>x_1$
for $j\neq 1,n$, hence $v>x_1^{b_1+1}x_n^{d-(b_1+1)}$. Assume that
$\n=(x_2,\ldots,x_n)$. We have the following cases:

\textbf{Case 1.} $v:x_1^{b_1+1}x_n^{d-(b_1+1)}=x_n$. Then
$v=x_1^{b_1}x_n^{d-b_1}$. Hence
 $I=\sum_{i=b_1}^{a_1}x_1^{i}\n^{d-i}$, so $I$ is polymatroidal.

\textbf{Case 2.} $v:x_1^{b_1+1}x_n^{d-(b_1+1)}=x_{j}$, where $j\neq
1,n$. Hence $v=x_1^{b_1}x_{j}x_n^{d-(b_1+1)}$. Therefore
$I=\sum_{i=b_1+1}^{a_1}x_1^{i}\n^{d-i} +J$, where

$$J:=(w\in M_d\;|\; deg_{x_1}(w)=b_1 \;\text{and}\; w\geq_{lex} v).$$
With the same arguments as used in the proof of  case 2 of
Proposition \ref{lex},  $I$ is polymatroidal.

\textbf{Case 3.} $v:x_1^{b_1+1}x_n^{d-(b_1+1)}=x_{j_1}\cdots
x_{j_l}$, where $l>1$ and $j_r\neq n$ for each $r=1,\ldots,l$. With
the same arguments as used in the proof of case 3 of Proposition
\ref{lex}, $I$ is polymatroidal.

\end{proof}

Based on Remark \ref{two variable}, Proposition \ref{quadratic},
Proposition \ref{pure power}, Proposition \ref{dimthree},
Proposition \ref{lex}, Proposition \ref{comlex} and based on
experimental evidence we are inclined to make the following:

\begin{conj}\label{conj}
Let $I\subset S$ be a monomial ideal generated in a single degree.
Then the following conditions are equivalent:
\begin{enumerate}[\upshape (a)]
  \item $I$ is polymatroidal.
  \item $I$ has linear quotients with respect to the reverse lexicographical ordering of the minimal
  generators induced by every ordering of variables.
\end{enumerate}
\end{conj}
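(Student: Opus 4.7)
The plan is to attempt (b) $\Rightarrow$ (a) by contradiction, via a minimal counterexample combined with a careful choice of reverse lexicographic order. Suppose $I$ satisfies (b) but is not polymatroidal, so there exist $u,v\in G(I)$ and an index $i$ with $\deg_{x_i}(u)>\deg_{x_i}(v)$ such that $(u/x_i)x_j\notin I$ for every $j$ with $\deg_{x_j}(v)>\deg_{x_j}(u)$. After relabelling assume $i=1$, and among all such pairs pick one minimising $d(u,v):=\deg(u:v)$. The base case $d(u,v)=1$ is immediate: then $u:v=x_1$ and $v:u=x_j$ for some $j\neq 1$ with $\deg_{x_j}(v)>\deg_{x_j}(u)$, whence $v=(u/x_1)x_j\in I$, contradicting the failure of exchange.

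For $d(u,v)\geq 2$ I would exploit the hypothesis by picking a reverse lex order in which $x_1$ is the smallest variable. Since $\deg_{x_1}(u)>\deg_{x_1}(v)$, one has $v>u$, and the linear-quotients property at $u$ produces $w\in G(I)$ with $w>u$ and $w:u=x_j$ a single variable dividing $v:u$; in particular $j\neq 1$. Writing $w=(u/x_k)x_j$ with $\deg_{x_k}(u)>\deg_{x_k}(w)$, the exchange for $(u,v)$ is witnessed by $w$ whenever $k=1$, so the difficulty concentrates in the case $k\neq 1$. To push $k$ toward $1$ I would refine the order: keep $x_1$ in last position and insert the variables $\{x_\ell \;|\; \deg_{x_\ell}(u)>\deg_{x_\ell}(v)\}$ in the positions just above $x_1$, varying the relative order inside this block. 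In the favourable subcase $k\neq 1$ with $\deg_{x_k}(u)>\deg_{x_k}(v)$, a direct computation shows $d(w,v)=d(u,v)-1$ while $\deg_{x_1}(w)=\deg_{x_1}(u)>\deg_{x_1}(v)$ persists, so $(w,v)$ should provide a smaller counterexample and contradict minimality.

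The main obstacle is the residual subcase $k\neq 1$ with $\deg_{x_k}(u)=\deg_{x_k}(v)$. Here $d(w,v)=d(u,v)$, the set $T_w(v):=\{\ell \;|\; \deg_{x_\ell}(v)>\deg_{x_\ell}(w)\}$ acquires a new index $k\notin T_u(v)$, and the non-membership of $(u/x_1)x_\ell$ in $I$ does not transfer to $(w/x_1)x_\ell$ since these are distinct monomials. This is exactly the structural asymmetry between lex and reverse lex already visible in Theorem \ref{thmlex}, where Lemma \ref{pure} closes the analogous delicate step but has no clear reverse lex analogue; the absence of such a lemma is presumably why the conjecture is only known in the special situations of Remark \ref{two variable} and Propositions \ref{quadratic}, \ref{pure power}, \ref{dimthree}, \ref{lex}, \ref{comlex}. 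To dispose of this residual case I would turn to an outer induction on $n$ (base cases $n\leq 3$ from Remark \ref{two variable} and Proposition \ref{dimthree}), trying to descend to a monomial localisation $I(P_C)$ that still satisfies (b), is polymatroidal by the inductive hypothesis, and from which polymatroidality of $I$ can be recovered in the spirit of Proposition \ref{pure power} together with \cite[Proposition 2.4]{BaHe}. The key new technical input — and what I view as the central open point — is to prove that hypothesis (b) descends along monomial localisation.
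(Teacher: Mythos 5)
The statement you are trying to prove is Conjecture \ref{conj}: the paper itself offers no proof of it, only verifications in special cases (Remark \ref{two variable}, Propositions \ref{quadratic}, \ref{pure power}, \ref{dimthree}, \ref{lex}, \ref{comlex}), so there is no ``paper proof'' to match your argument against. Your proposal is a plan rather than a proof, and you correctly identify that it is incomplete; but the gap is larger than the single ``residual subcase'' you flag. The non-transfer problem already breaks your \emph{favourable} subcase: having chosen a minimal counterexample $(u,v)$ and produced $w=(u/x_k)x_j\in G(I)$ with $k\neq 1$ and $\deg_{x_k}(u)>\deg_{x_k}(v)$, you assert that $(w,v)$ is a smaller counterexample. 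But being a counterexample means that $(w/x_1)x_\ell\notin I$ for every $\ell$ with $\deg_{x_\ell}(v)>\deg_{x_\ell}(w)$, and your hypothesis only concerns the monomials $(u/x_1)x_\ell$; since $w\neq u$, nothing is known about $(w/x_1)x_\ell$, so minimality of $d(u,v)$ cannot be invoked. (And if $(w,v)$ is \emph{not} a counterexample, the exchange it satisfies again produces $(w/x_1)x_\ell\in I$, which says nothing about $u$.) This is exactly why the paper's proof of Theorem \ref{thmlex} in the lex case keeps $u$ fixed and varies only the partner $v$ inside the set $A$ of bad partners, so that every exchange conclusion extracted along the way is a statement about $(u/x_1)x_s$; your scheme instead modifies $u$, which discards the defining data of the counterexample at the first step. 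A revlex analogue of that strategy runs into the structural obstruction you half-identify: with $x_1$ smallest one gets $v>u$, so linear quotients is applied at $u$ and yields neighbours of $u$, not of $v$, and there is no analogue of Lemma \ref{pure} to restart the process.

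The second pillar of your plan, descent of hypothesis (b) along monomial localization, is likewise unproved and is not a routine verification: setting a variable to $1$ can merge and reorder minimal generators, and there is no evident way to lift a revlex linear-quotients ordering of $I(P_C)$ failures back to $I$. In the paper, localization is only used in Proposition \ref{pure power} after the generators of $I(P_{\{n\}})$ have been computed explicitly to be $(x_1,\ldots,x_{n-1})^{d-k}$, and polymatroidality is then recovered from \cite[Proposition 2.4]{BaHe}, which needs a linear resolution of the localized ideal, not property (b); your proposed inductive step would need a genuinely new statement of the form ``(b) for $I$ implies (b) (or at least a linear resolution) for $I(P_C)$,'' which is close in difficulty to the Bandari--Herzog conjecture itself. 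So as it stands the proposal does not establish the conjecture: both the minimal-counterexample reduction and the localization descent are open, and the statement remains a conjecture.
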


Let $I\subset S$ be a monomial ideal minimally generated by
$u_1,\ldots,u_r$. We say that $I$ has \emph{quotients with linear
resolution} with respect to the ordering $u_1,\ldots,u_r$ whenever
$I$ has a linear resolution and, moreover, for all $j=2,\ldots,r$,
the colon ideal $(u_1,\ldots,u_{j-1}):u_j$ has a linear resolution.

\begin{rem}
It is clear that if a monomial ideal  generated in a single degree
has linear quotients with respect to the ordering $u_1,\ldots,u_r$
of minimal generators then it also has quotients with linear
resolution with respect to the ordering $u_1,\ldots,u_r$. So
polymatroidal ideals have  quotients with linear resolution with
respect to the (reverse) lexicographical ordering of the minimal
generators induced by every ordering of variables.

It is natural to ask whether  Conjecture \ref{conj} can be weakened
to quotients with linear resolution property with respect to the
(reverse) lexicographical
 ordering. There is a negative
answer to this question in a general. For example the ideal
$I=(x_1x_3^2,x_1^2x_3,x_1x_2x_3, x_2^2x_3)$ has quotients with
linear resolution with respect to the lexicographical
 and  reverse lexicographical
 ordering of the minimal generators induced by every ordering of
 variables, but it is not polymatroidal. We note that $I$ dose not have
 linear quotients with respect to the
lexicographical  ordering of the minimal generators induced by the
ordering $x_3>_{lex}x_2>_{lex}x_1$ and   the reverse lexicographical
ordering of the minimal generators induced by the ordering
$x_3>x_2>x_1$ of variables.
\end{rem}

\section*{\bf Acknowledgments}
The authors would like to thank referees for valuable and useful
comments regarding this paper.

\end{document}